\newcolumntype{C}[1]{>{\centering\arraybackslash}m{#1}}
\newtheorem{theorem}{Theorem}{}
\newtheorem{lemma}{Lemma}{}
{}
{}
{}
{}
{}
\theoremstyle{definition}
\newtheorem{example}[theorem]{Example}
{}
\newcommand{\rank}{{\rm rank}}
\newcommand{\re}{{\rm Re}}
\newcommand{\im}{{\rm Im}}
\title{\bf A bound on the girth of quaternion unit gain graphs in terms of the rank}
\author{Suliman Khan$^{a,b,\ast}$, Edwin R. van Dam$^{b}$}
\affil{$^{a}$Department of Mathematics and Physics,\\
University of Campania ``Luigi Vanvitelli'',\\
Viale Lincoln 5, Caserta, I–81100, Italy \vspace{0.3cm}\\
$^{b}$Department of Econometrics and O.R.,\\
Tilburg University, Tilburg, Netherlands. \vspace{0.4cm}\\

 \small{Email: suliman.khan@unicampania.it,~ Edwin.vanDam@tilburguniversity.edu}}
\date{}
\begin{document}

\maketitle

\vspace{-0.1cm}

\begin{abstract}  \noindent  We obtain a bound on the girth $g$ of a quaternion unit gain graph in terms of the rank $r$ of its adjacency matrix. In particular, we show that $g \leq r+2$ and characterize all quaternion unit gain graphs for which $g=r+2$. This extends corresponding results for (ordinary) graphs, signed graphs, and complex unit gain graphs.

\end{abstract}
\vspace{0.2cm}
\thanks{{\em Math. Subj. Class.}  05C05, 05C50}\\
\thanks{{\em Keywords}: Quaternion unit gain graph, girth, rank}

\section{Introduction}
\indent

We study quaternion unit gain graphs, that is, graphs with a so-called gain function that assigns a unit of the division ring of quaternions to each oriented edge. This extends the notion of signed graphs and complex unit gain graphs.

The adjacency matrix of a quaternion unit gain graph is an Hermitian matrix containing the gains of the edges in the canonical way. This extends the usual adjacency matrix of graphs and signed graphs in the same way as for complex unit gain graphs.

Studying the rank of graphs dates back to Collatz and Sinogowitz \cite{Collatz1957}, who proposed to characterize all singular graphs. Recent results that relate the rank of graphs in relation to the girth --- the size of the shortest cycle --- have been obtained by Zhou, Wong, and Tam \cite{Zhou2021} and Chang and Li \cite{Chang2022}. The rank of graphs has been studied in relation to other graph parameters as well, such as the matching number (see \cite{ Li2019, Ma2020, Song2015,Wang2014}) and the maximum degree (see \cite{Cheng2022, Sun2020, Wang2020, WangGuo2020,  Zhou2018}).

Also the rank of signed graphs and complex unit gain graphs has been extensively studied; see \cite{Chen2022, He2019, Wang2019, Wu2022} and
\cite{  He2020c, He2022c, Khan2024c, Li2022c, Lu2021c, Lu2019c, Lu2021c1,    Reff2012,  Xu2020c, Yu2015c}, respectively.

In this paper, we obtain a bound on the girth of a quaternion unit gain graph in terms of the  rank of its adjacency matrix, and characterize the extremal gain graphs. This generalizes the corresponding results for ordinary graphs by Zhou et al. \cite{Zhou2021}, signed graphs by Wu, Lu, and Tam \cite{Wu2022}, and complex unit gain graphs by Khan \cite{Khan2024c}.

\subsection{Quaternions}

The (non-commutative) division ring $\mathbb{H}$ of quaternions can be described as a 
$4$-dimensional vector space over the real numbers $\mathbb{R}$ with basis $1,i,j$, and $k$, where multiplication is defined by (extending) the following rules:

 $$i^2=j^2=k^2=ijk=-1;$$
 $$ij=k=-ji,~ jk=i=-kj,~ ki=j=-ik.$$
The quaternions thus extend the complex numbers $\mathbb{C}$.

Let $q=a_0+a_1i+a_2j+a_3k \in \mathbb{H}$. We first note that $q=(a_0+a_1i)+(a_2+a_3i)j$, which explicitly shows that $\mathbb{H}$ can be seen as a $2$-dimensional vector space over $\mathbb{C}$. The real and imaginary parts of $q$ are defined by $\re(q)=a_0$ and $\im(q)=a_1i+a_2j+a_3k$, respectively. The {\it conjugate} $q^*$ of $q$ is defined by  $q^*=a_0-a_1i-a_2j-a_3k$. We note that $(pq)^*={q}^*{p}^*$ for all $p,q \in \mathbb{H}$. The {\it modulus} $|q|$ of $q$ is given by $|q|=\sqrt{q{q}^*}=\sqrt{a_0^2+a_1^2+a_2^2+a_3^2}$. 
The  {\it circle group} of unit quaternions is denoted by $U(\mathbb{H})$, i.e., $U(\mathbb{H})=\{q \in \mathbb{H}: |q|=1\}$.

Let $A \in \mathbb{H}^{m\times n}$ be a matrix with quaternion entries. The {\it rank} of $A$ is defined as the minimal $r$ such that there is an $m \times r$ matrix $B$ and an $r \times n$ matrix $C$ such that $A=BC$. It follows that the rank equals the so-called {\it left row rank} of $A$ --- the largest number of rows of $A$ that are left linearly independent (where left refers to the coefficients multiplying from the left) --- and the similarly defined {\it right column rank}. It also follows that the rank of $A$ is the same as the rank of its conjugate transpose $A^*$.

\subsection{Quaternion unit gain graphs}

A quaternion unit gain graph is a pair $G^\varphi=(G, \varphi)$, where $G=(V(G),E(G))$ is an ordinary graph --- the {\it underlying graph} of $G^\varphi$ --- and $\varphi: \overrightarrow{E} \rightarrow U(\mathbb{H})$ is a function --- the {\it gain function} --- that assigns a unit quaternion from the circle group $U(\mathbb{H})$ to each (oriented) edge in $\overrightarrow{E}$ such that  $\varphi(e_{jk})={\varphi(e_{kj})}^*$ for each $e_{jk} \in \overrightarrow{E}(G)$. For a quaternion unit gain graph $G^\varphi$ with $V(G)=\{v_1,\dots,v_n\}$, its {\it adjacency matrix} is an $n \times n$ Hermitian matrix denoted by $A(G^\varphi)=(\ell_{jk})_{n \times n}$ and defined as follows:
$$\ell_{jk}=\left \{ \begin{array}{rcl}
\varphi(e_{jk}) & \mbox{if $v_j$ is adjacent to $v_k$}, \\
0 & \mbox{otherwise.}\\
\end{array} \right.$$

For background on matrices of quaternions, we refer to the paper by Zhang \cite{Zhang97}.
Belardo et al.~\cite{Belardo2022} recently considered the spectra of quaternion unit gain graphs, among others of the adjacency matrix. Kyrchei et al.~\cite{Kyrchei2024} studied the determinant of the Laplacian matrix of quaternion unit gain graphs. Zhou and Lu \cite{Zhou2023} studied the rank of quaternion unit gain graphs in relation to the rank of their underlying graphs. In our study, we consider the rank of quaternion unit gain graphs in relation to the girth.

\subsection{The main result}

As usual, for a walk $W$ in $G^\varphi$, we let its gain $\varphi(W)$ be the product of the gains of the (oriented) edges in the walk, where we multiply the gain in each step of the walk by the corresponding edge gain from the right (this corresponds naturally with multiplication from the right by the adjacency matrix). We note that the gain of a cycle depends (in general) on its starting point and direction, but not its real part (see \cite{Belardo2022}). Thus, $\re(\varphi(C))$ is well defined for a cycle $C$ in $G^\varphi$. Moreover, also the property that $\varphi(C) \in \mathbb{R}$ is independent of direction and starting point.

\begin{theorem}\label{Theorem1}
Let $G^{\varphi}$ be a connected quaternion unit gain graph with girth\footnote{We assume that the underlying graph is not a tree} $g$ and rank $r$. Then $g \leq r+2$, with equality if and only if $g$ is even and one of the following holds.\\
(i) $G^{\varphi}$ is a $g$-cycle with gain $(-1)^{\frac{g}{2}}$.\\
(ii) $G^{\varphi}$ is a complete bipartite graph with both biparts having order at least $2$, in which all $4$-cycles have gain $1$.
\end{theorem}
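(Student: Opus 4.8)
The plan is to follow the strategy that has worked for graphs, signed graphs, and complex unit gain graphs, adapting each step to the non-commutative quaternion setting. First I would establish the easy inequality $g \leq r+2$. The natural approach is by induction on the number of vertices $n$. If $G^\varphi$ has a pendant vertex, deleting it (and its neighbour) reduces the rank by exactly $2$ and does not decrease the girth, so induction applies; this is where I would use that the underlying graph is not a tree (otherwise the statement is vacuous or the base case fails). If $G^\varphi$ has no pendant vertices, pick a shortest cycle $C$ of length $g$; then the vertices of $C$ that have degree $2$ in $G^\varphi$, together with a spanning set of independent rows, give a lower bound on the rank. More concretely, I would argue that the submatrix of $A(G^\varphi)$ indexed by a path of $g-1$ consecutive vertices on $C$ (or a suitable $(g-1)\times(g-1)$ principal-type submatrix coming from $C$ minus a vertex) has full rank $g-1$; combined with at least one more independent row (since the underlying graph is not just the cycle, or to handle the cycle case directly), this forces $r \geq g-2$, i.e. $g \leq r+2$. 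For the bare $g$-cycle one computes the rank of the circulant-like Hermitian matrix directly: its rank is $g$ unless the cycle gain has real part $\cos(2\pi t/g)$ for some integer $t$ in a way that creates a kernel vector, and one checks the rank drops to $g-2$ exactly when $g$ is even and $\varphi(C) = (-1)^{g/2}$.

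For the characterization of equality, I would first reduce to the case where $G^\varphi$ has no pendant vertices: if it had one, the pendant-deletion step shows $G^\varphi$ achieves equality iff the smaller graph does, and I would track what the extremal smaller graphs pull back to — the key point being that reattaching a pendant path strictly increases $r$ faster than $g$ in all but degenerate situations, so extremal graphs are $2$-connected (equivalently, $2$-degenerate-free in the relevant sense). Next, with $\delta(G^\varphi) \geq 2$, I would use a counting/rank argument: every vertex not on the chosen shortest cycle $C$, and every ``chord-like'' structure, contributes to the rank. This should show the underlying graph has very restricted structure — essentially it must be $C$ itself or $C$ together with vertices that force $C$ to have length $3$ or $4$. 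Parity then enters: if $g$ is odd, a careful look at the rank of the cycle (or of $C_3$-type configurations) shows $r \geq g-1$, ruling out equality; hence $g$ is even. This gives case (i), the pure even cycle with the forced gain $(-1)^{g/2}$.

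Case (ii) requires showing that when the extremal graph is not a single cycle, it must be complete bipartite with both parts of size at least $2$ and all $4$-cycles of gain $1$. Here I would argue that in an extremal graph with a vertex off the girth cycle, the girth must be $4$ (any longer even girth combined with an extra vertex pushes the rank up past $g-2$), so $g=4$ and $r=2$. A Hermitian quaternion matrix of rank $2$ that is the adjacency matrix of a connected gain graph with girth $4$: I would show such a matrix factors as $A = B B^*$-like expressions of rank $2$, forcing the support to be a complete bipartite graph (this mirrors the classical fact that connected graphs of rank $2$ are complete bipartite). The gain condition ``all $4$-cycles have gain $1$'' then comes from requiring that the rank-$2$ Hermitian matrix actually has the block anti-diagonal form $\begin{pmatrix} 0 & M \\ M^* & 0 \end{pmatrix}$ with $M$ of rank $1$ over $\mathbb{H}$; rank-$1$ of $M$ over the quaternions translates exactly into the statement that going around any $4$-cycle gives gain $1$ (equivalently $\re(\varphi(C_4)) = 1$, which over $\mathbb H$ forces $\varphi(C_4)=1$). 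I would also verify the converse: both configurations (i) and (ii) indeed have $r = g-2$, by exhibiting an explicit kernel of the right dimension.

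The main obstacle I anticipate is the non-commutativity of $\mathbb{H}$ in two places: (a) computing ranks of the ``circulant'' matrix of a cycle, where the usual eigenvalue/root-of-unity argument over $\mathbb{C}$ must be replaced by a direct row-reduction or by passing through the complex adjoint representation $\mathbb{H} \hookrightarrow \mathbb{C}^{2\times 2}$ (so an $n \times n$ quaternion matrix of rank $\rho$ becomes a $2n \times 2n$ complex matrix of rank $2\rho$), being careful that the Hermitian quaternion structure is preserved; and (b) the rank-$1$ factorization of $M$ in case (ii), since ``every $2\times2$ minor vanishes'' and ``rank $1$'' are more delicate over a division ring — I would lean on the left-row-rank/right-column-rank equality stated in the excerpt, and on the fact that $\re(\varphi(C))$ being direction-independent lets me phrase the $4$-cycle condition cleanly. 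Modulo these, the combinatorial skeleton of the proof is the same induction-plus-case-analysis used in \cite{Khan2024c, Wu2022, Zhou2021}.
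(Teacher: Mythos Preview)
Your overall strategy would work and is essentially the induction-plus-case-analysis template from \cite{Zhou2021, Wu2022, Khan2024c}, but the paper takes a noticeably shorter route at almost every step, and you should be aware of the simplifications.

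For the inequality $g \leq r+2$, the paper does not use induction or pendant deletion at all: it simply observes that a shortest cycle $C$ is an induced subgraph, so $\rank(C) \leq r$, and then invokes the cycle-rank lemma (Lemma~\ref{Lemma3}) to get $\rank(C) \geq g-2$. That is the whole proof of the bound. Your path-submatrix argument is both unnecessary and slightly off (a gain path on $g-1$ vertices has rank $g-2$ when $g$ is even, not $g-1$).

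For the equality case, the paper again avoids any pendant-vertex reduction. It argues directly: if $G^\varphi$ is not itself a $g$-cycle, pick a vertex $v$ outside a girth cycle $C$ but adjacent to it. If $v$ has only one neighbour on $C$, then $C$ plus the pendant edge is an induced subgraph of rank $\geq g$ (Lemma~\ref{Lemmapendantcycle}), contradicting $r=g-2$. Hence $v$ has at least two neighbours on $C$, which creates a cycle of length $\leq \tfrac{g}{2}+2$, forcing $g\leq 4$; since $g$ is even, $g=4$ and $r=2$. Bipartiteness is then obtained not from a general counting argument but from the observation that a shortest odd cycle is always induced, and an induced odd cycle of length $\geq 5$ already has rank $\geq 4$ by Lemma~\ref{Lemma3}. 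Finally, rank $2$ plus bipartite is handled by the short Lemma~\ref{Lemmabipartite}, exactly the rank-$1$ block argument you sketch.

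On your anticipated obstacle (a): the paper sidesteps the non-commutativity in computing cycle ranks entirely by \emph{switching}. Any gain cycle is switching-equivalent to one with all edge gains equal to $1$ except a single edge carrying the total cycle gain $h$; the rank of this normalized cycle is then obtained by a two-step row reduction $\rank(C_n(h)) = 2 + \rank(C_{n-2}(-h))$ and induction. No complex adjoint representation or circulant eigenvalue analysis is needed. This switching trick is the key device you are missing, and it makes both the cycle-rank computation and the path/pendant lemmas essentially identical to the real case.
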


We note that this result also applies to complex unit gain graphs (note that in \cite{Khan2024c}, there are some incorrect definitions of types of gain graphs).

The subsequent sections are arranged as follows.
 In Section $2$, we recall the method of switching in gain graphs and consider the ranks of paths and cycles. In Section 3, we present the proof of Theorem \ref{Theorem1}.

\section{Switching, paths, and cycles}

A switching function on a (quaternion unit) gain graph with vertex set $V$ is a function $\vartheta: V \rightarrow U(\mathbb{H})$. Switching a quaternion unit gain graph $G^\varphi$ by the switching function $\vartheta$ results in a new quaternion unit gain graph ${G^\varphi}^\vartheta$, in which the underlying graph remains unchanged but the gain of any edge $xy$ is changed into $\varphi^\vartheta(xy)={\vartheta(x)}^{*}\varphi(xy)\vartheta(y)$. 
The adjacency matrix of ${G^\varphi}^\vartheta$ can thus be expressed as $A({G^\varphi}^\vartheta) =D^*AD$, where $D$ 
is the diagonal matrix containing the switching function values $\vartheta(v), v \in V,$ and $A$ is the adjacency matrix of $G^\varphi$. Hence, switching does not change the rank.


It is well-known (for other types of gain graphs) and easy to see that any gain tree is switching equivalent to its underlying graph, and hence the rank of a gain tree is the same as the rank of the underlying graph. In particular, the ranks of gain paths follow.

\begin{lemma}\label{LemmaP1}
Let $P_n^{\varphi}$ be a quaternion unit gain path of order $n$ and rank $r$. Then $r=n-1$ if $n$ is odd, and $r=n$ if $n$ is even.
\end{lemma}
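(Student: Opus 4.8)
The plan is to reduce immediately to the underlying graph. As recalled just above, any gain tree — and in particular a gain path $P_n^{\varphi}$ — is switching equivalent to its underlying graph, and switching preserves the rank; hence $r = \rank\!\big(A(P_n)\big)$, where $A(P_n)$ is the $n \times n$ tridiagonal $0$–$1$ matrix with ones exactly in the positions $(t,t+1)$ and $(t+1,t)$. So it remains to compute the rank of this one fixed real symmetric matrix.

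I would do this via a determinant recurrence. Writing $d_n = \det A(P_n)$ and expanding along the first row (or column) gives $d_n = -\,d_{n-2}$ for $n \geq 2$, together with $d_0 = 1$ and $d_1 = 0$. Hence $d_n = 0$ whenever $n$ is odd, while $d_n = (-1)^{n/2} \neq 0$ whenever $n$ is even. In the even case this already yields $r = n$. In the odd case, $d_n = 0$ gives $r \leq n-1$; deleting an endpoint of $P_n$ leaves $P_{n-1}$, whose adjacency matrix is a principal $(n-1)\times(n-1)$ submatrix of $A(P_n)$ and is nonsingular since $n-1$ is even, so $r \geq n-1$ and therefore $r = n-1$. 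Equivalently, one may simply quote the classical fact that the eigenvalues of $A(P_n)$ are $2\cos\frac{t\pi}{n+1}$, $t = 1,\dots,n$, which contains a zero — of multiplicity one — precisely when $n$ is odd, namely for $t = \frac{n+1}{2}$.

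There is essentially no obstacle here: the only step that touches the quaternion structure is the switching reduction, which has already been granted, and everything afterwards is a routine computation with a small real matrix. If one preferred to avoid the reduction, one could instead work directly with $A(P_n^{\varphi})$ using that $\rank$ equals the left row rank and performing row operations (each off-diagonal entry being a unit quaternion, hence left-invertible), which leads to the same two cases; but the switching argument is cleaner and is what I would use.
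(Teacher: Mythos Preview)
Your proof is correct and follows essentially the same approach as the paper: reduce to the underlying path via switching, then invoke the rank of the ordinary path (the paper cites the eigenvalues $2\cos\frac{t\pi}{n+1}$ from \cite{bh12}, which you also mention). Your determinant recurrence is a fine alternative way to finish, but there is no substantive difference in strategy.
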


\begin{proof}
    By switching, this follows from the corresponding result for ordinary paths, which follows for example from their eigenvalues \cite{bh12}.
\end{proof}

Zhou and Lu \cite{Zhou2023} determined the rank of quaternion unit gain cycles. Their relatively technical proof can be simplified using switching, for any cycle can be switched into a cycle in which all but one of the edges have gain $1$, and where the remaining edge gain contains all the gain of the original cycle. Indeed, if the original cycle $C$ has vertices $v_0,v_1,v_2, \dots ,v_n=v_0$ such that $v_iv_{i+1}$ is an edge with gain $\varphi(v_iv_{i+1})=:\phi_i$ for $i=0,\dots,n-1$, then by defining the switching function $\vartheta$ recursively by $\vartheta(v_0)=1$ and $\vartheta(v_{i+1})=\phi_i^*\vartheta(v_{i})$ for $i=0,\dots,n-2$, we obtain that after switching only the final edge $v_{n-1}v_n$ has (possibly) non-one gain, and this gain equals $h=\phi_0\phi_1 \cdots \phi_{n-1}$, that is, the gain of the closed walk on the original cycle $C$ starting at $v_0$. As noted before, this gain does depend on the direction and starting point; however $\re(\varphi(C))$ is well defined and also the property that $\varphi(C) \in \mathbb{R}$ is independent of direction and starting point.

\begin{lemma}\label{Lemma3}
\cite{Zhou2023} Let $C$ be a  quaternion unit gain $n$-cycle with rank $r$. Then\\
\begin{equation*}
 r=
  \begin{cases}
    n-2, & \text{ if $n$ is even and} \ ~ \varphi(C)=(-1)^{\frac{n}{2}};\\
    n, & \text{ if $n$ is even and} \ ~ \varphi(C) \neq (-1)^{\frac{n}{2}};\\
    n, & \text{ if $n$ is odd and} \ ~ \re(\varphi(C)) \neq 0;\\
    n-1, & \text{ if $n$ is odd and} \ ~ \re(\varphi(C)) = 0.
  \end{cases}\vspace{0.38cm}
\end{equation*}
\end{lemma}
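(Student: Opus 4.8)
The plan is to work entirely with the standard form produced by the switching reduction already described: after switching, $C$ has adjacency matrix $A=(a_{pq})_{0\le p,q\le n-1}$ with $a_{p,p+1}=a_{p+1,p}=1$ for $p=0,\dots,n-2$, with $a_{n-1,0}=h$ and $a_{0,n-1}=h^*$ where $h=\varphi(C)$, and with all other entries $0$. Since switching preserves rank, it suffices to compute the rank of this $A$. Because $\mathbb{H}$ is a division ring, rank--nullity applies to the right $\mathbb{H}$-module map $x\mapsto Ax$ on $\mathbb{H}^n$ (columns, scalars acting on the right): its image is the right span of the columns of $A$, so its dimension is the right column rank, which the excerpt identifies with $\rank(A)$. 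Hence $r=n-\dim N$ with $N=\{x\in\mathbb{H}^n:Ax=0\}$, and I would determine $N$ explicitly.

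Next I would solve $Ax=0$ row by row. For the interior rows $p=1,\dots,n-2$ the equation reads $x_{p-1}+x_{p+1}=0$, i.e.\ $x_{p+1}=-x_{p-1}$; iterating gives $x_{2m}=(-1)^m x_0$ and $x_{2m+1}=(-1)^m x_1$, so the whole null vector is pinned down by $x_0$ and $x_1$. The two remaining rows, namely row $0$: $x_1+h^*x_{n-1}=0$ and row $n-1$: $x_{n-2}+hx_0=0$, then impose two further relations on $x_0,x_1$ once the closed forms for $x_{n-1},x_{n-2}$ (which depend on the parity of $n$) are substituted. Here one must pull the real $\pm1$ signs consistently to one side before doing anything, and remember that in a division ring $\lambda x=0$ forces $x=0$ whenever $\lambda\neq0$.

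Carrying this out in the two parity cases yields the statement. If $n=2t$ is even, then $x_{n-1}=(-1)^{t-1}x_1$ and $x_{n-2}=(-1)^{t-1}x_0$, and both boundary equations collapse to the single scalar condition $h=(-1)^t=(-1)^{n/2}$: when it holds, $x_0$ and $x_1$ are free and $\dim N=2$, so $r=n-2$; otherwise $x_0=x_1=0$, $N=0$, and $r=n$. If $n=2t+1$ is odd, then $x_{n-1}=(-1)^t x_0$ and $x_{n-2}=(-1)^{t-1}x_1$, row $0$ forces $x_1=(-1)^{t+1}h^*x_0$, and substituting into row $n-1$ leaves $(h+h^*)x_0=2\re(h)\,x_0=0$: if $\re(h)\neq0$ then $N=0$ and $r=n$, while if $\re(h)=0$ then $x_0$ is free (and $x_1$ is determined by it), so $\dim N=1$ and $r=n-1$. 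Throughout, the conditions ``$\varphi(C)=(-1)^{n/2}$'' and ``$\re(\varphi(C))=0$'' are independent of basepoint and direction, as recalled just before the lemma, so the case distinction is well posed.

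The only genuinely delicate points are the non-commutative bookkeeping --- every scalar factor must be moved to the same side before it can be cancelled, and ``rank'' must be read as right column rank for the module-theoretic step to apply --- together with the use of rank--nullity over $\mathbb{H}$; these replace the determinant/minor manipulations one would use over a field. One could instead expand a suitable Study or Dieudonn\'e determinant, but the null-space computation is shorter and uniform across all four cases.
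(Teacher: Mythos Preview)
Your argument is correct, and the null-space computation goes through exactly as you describe; the only cosmetic point is that in the even case the two boundary equations do not literally ``collapse to a single condition'' --- one constrains $x_0$ and the other $x_1$ --- but since $h=(-1)^t$ is equivalent to $h^*=(-1)^t$, the two freeness conditions coincide, and your conclusion stands.

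Your route, however, differs from the paper's. Both begin with the switching reduction to the canonical cycle $C_n(h)$ with a single non-trivial gain, but from there the paper proceeds by an inductive row reduction: eliminating with two consecutive rows yields the recursion $\rank(C_n(h))=2+\rank(C_{n-2}(-h))$, and the four cases then follow by checking the base cases $C_3(\pm h)$ and $C_4(\pm h)$. You instead compute the right null space of $A$ directly via rank--nullity over $\mathbb{H}$, solving the recurrence $x_{p+1}=-x_{p-1}$ and imposing the two boundary constraints. The paper's recursion is structurally clean and makes the sign flip $h\mapsto -h$ visible, but requires separate verification of the small cases; your approach is uniform across all parities and sizes, avoids induction entirely, and makes explicit why $\re(h)$ is the relevant invariant in the odd case (it emerges as the coefficient $h+h^*$). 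Either method is short, and yours is a perfectly acceptable alternative.
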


\begin{proof}
    (Sketch). We first switch $C$ into a cycle $C_n(h)$, with all but one edges having gain $1$ and the remaining one having gain $h=\varphi(C)$ (as explained above). By row reduction it follows that  $\rank(C_n(h))=2+\rank(C_{n-2}(-h)$. By induction and considering the ranks of $C_3(\pm h)$ and $C_4(\pm h)$, the result follows.
\end{proof}

Finally, we need the following straightforward result.

\begin{lemma}\label{Lemmapendantcycle}
Let $G^{\varphi}$ be a quaternion unit gain graph whose underlying graph is an $n$-cycle with a pendant edge. Then $\rank(G^{\varphi})=2+\rank(P_{n-1}) \geq n$.
\end{lemma}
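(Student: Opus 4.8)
The plan is to reduce the statement to Lemma~\ref{LemmaP1} by exhibiting a convenient switching equivalent of $G^\varphi$ and then performing an explicit row reduction on its adjacency matrix. First I would use the switching argument already developed for cycles: label the $n$-cycle as $v_0,v_1,\dots,v_{n-1},v_n=v_0$ with the pendant vertex $u$ attached to $v_0$, and choose the switching function $\vartheta$ recursively along the cycle (as in the paragraph preceding Lemma~\ref{Lemma3}), together with $\vartheta(u)=1$. After switching, every edge of the cycle except $v_{n-1}v_0$ has gain $1$, the edge $v_{n-1}v_0$ has gain $h=\varphi(C)$, and the pendant edge $uv_0$ still has some unit gain, which a further switch at $u$ (or simply at the pendant leaf, which lies on no cycle) makes equal to $1$. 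Since switching preserves rank, it suffices to compute the rank of this standard-form gain graph.

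Next I would write down its $(n+1)\times(n+1)$ Hermitian adjacency matrix and reduce. The pendant vertex $u$ contributes a row and column with a single nonzero entry $1$ in the $v_0$ position. Using that row to clear the $v_0$ column (and symmetrically), one removes the two vertices $u$ and $v_0$ at the cost of rank exactly $2$, and what remains is the adjacency matrix of the gain path on $v_1,\dots,v_{n-1}$ — all of whose edges now have gain $1$, since the only non-unit cycle edge $v_{n-1}v_0$ has been eliminated along with $v_0$. Hence $\rank(G^\varphi)=2+\rank(P_{n-1})$. Finally I would invoke Lemma~\ref{LemmaP1}: $\rank(P_{n-1})=n-2$ if $n$ is odd and $\rank(P_{n-1})=n-1$ if $n$ is even, so in either case $\rank(P_{n-1})\ge n-2$, giving $\rank(G^\varphi)\ge n$.

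The only point requiring a little care — and the closest thing to an obstacle — is checking that the Gaussian elimination over the noncommutative ring $\mathbb{H}$ is legitimate here: one must clear the $v_0$ row and column using the pendant entry without disturbing the gain-$1$ path structure on the remaining vertices. This is clean because the pendant row has a \emph{single} nonzero entry, so the elimination only subtracts multiples of the (otherwise zero) $u$-row and $u$-column and never mixes the path edges; equivalently, $A(G^\varphi)$ is congruent (via the evident elementary transformations $E^*AE$, which preserve rank just as switching $D^*AD$ does) to the direct sum of a rank-$2$ block on $\{u,v_0\}$ and the gain-one path matrix on $\{v_1,\dots,v_{n-1}\}$. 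Everything else is bookkeeping, and the inequality $2+\rank(P_{n-1})\ge n$ is immediate from Lemma~\ref{LemmaP1}.
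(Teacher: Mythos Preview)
Your argument is essentially the paper's own: row-reduce using the pendant vertex's single-entry row and column to peel off a rank-$2$ block on $\{u,v_0\}$, leaving the gain path on $v_1,\dots,v_{n-1}$, and then apply Lemma~\ref{LemmaP1}. Two minor remarks: the preliminary switching is harmless but unnecessary, since Lemma~\ref{LemmaP1} already covers arbitrary gain paths; and you have the parity cases swapped (when $n$ is odd, $n-1$ is even and $\rank(P_{n-1})=n-1$, and conversely), though this does not affect the conclusion $\rank(P_{n-1})\ge n-2$.
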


\begin{proof}
    This follows from row reduction using the rows corresponding to the vertices of the pendant edge, and applying Lemma \ref{LemmaP1}.
\end{proof}

\section{Proof of the main result}

\subsection{The bound}

Let $G^\varphi$ be a quaternion unit gain graph with girth $g$ and rank $r$. The bound $g \leq r+2$ in our main result follows easily by observing that the rank of any induced subgraph (whose adjacency matrix is a submatrix of $A(G^\varphi)$), in particular a cycle of order $g$, is at most the rank of $G^\varphi$ and then applying Lemma \ref{Lemma3}. By the same lemma, we find that equality can only occur if $g$ is even and every cycle of order $g$ has gain $(-1)^{g/2}$. What remains is to find all gains graphs where this occurs.

\subsection{Equality}

It will turn out that equality can only occur if the underlying graph is an even cycle or a complete bipartite graph. We start by characterizing the gain graphs on the latter family.

\begin{lemma}\label{Lemmabipartite}
 Let $G^\varphi$ be a connected bipartite quaternion unit gain graph with rank $r$. Then $r=2$ if and only if $G^\varphi$ is complete bipartite and all $4$-cycles have gain $1$.
\end{lemma}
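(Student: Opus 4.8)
The plan is to prove both directions by reduction to small configurations and by exploiting the switching machinery from Section 2. For the ``if'' direction, suppose $G^\varphi$ is complete bipartite with biparts $X$ and $Y$ and all $4$-cycles have gain $1$. First I would switch so that all edges incident to a fixed vertex $x_0 \in X$ have gain $1$; this is possible since the star at $x_0$ is a tree. The condition that every $4$-cycle $x_0 y x y'$ has gain $1$ then forces $\varphi(x y) = \varphi(x_0 y)^* \varphi(x_0 y') \varphi(x y')^{-1} = \cdots$, and a short computation shows that after this switching, \emph{every} edge has gain $1$ (working one vertex at a time, using that each vertex lies in a $4$-cycle through $x_0$). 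Hence $A(G^\varphi)$ is switching-equivalent to the adjacency matrix of an ordinary complete bipartite graph, which has rank $2$.

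For the ``only if'' direction, assume $r = 2$. Since $r \geq 1$ and $G^\varphi$ is connected with at least one edge, after switching a spanning tree to all-ones gains we may assume a chosen edge has gain $1$. The key structural steps: (a) \emph{No induced path $P_4$ can exist with the ``wrong'' gain behavior}, and more importantly \emph{$G^\varphi$ cannot contain an induced $P_5$ or an induced even cycle of length $\geq 6$ or an induced cycle with a pendant edge}, because by Lemmas \ref{LemmaP1}, \ref{Lemma3}, and \ref{Lemmapendantcycle} each of these induced subgraphs already has rank $\geq 3 > r$. Since $G^\varphi$ is bipartite, it has no odd cycles, so the only induced cycles are even; an induced $C_6$ (or longer) has rank $\geq 4$, and an induced $C_4$ has rank $2$ or $4$, with rank $2$ forcing gain $1$. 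Having no induced $P_5$ and no induced $C_{\geq 6}$ is a strong constraint: I would argue that the underlying graph must be complete bipartite (a bipartite graph with no induced $P_5$ and diameter constraints coming from the rank bound — actually, I'd show directly that any two vertices in the same bipart have the same neighborhood, using that otherwise one produces an induced $P_4$ or $P_5$ between them that raises the rank). Then, since every $4$-cycle is induced (complete bipartite graphs have every $4$-cycle induced... careful: every $4$-cycle spans an induced $C_4$ in $K_{p,q}$), rank $2$ forces each $4$-cycle to have gain $1$ by Lemma \ref{Lemma3}.

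I expect the main obstacle to be the ``only if'' direction's structural claim that the underlying graph is complete bipartite. The cleanest route is probably: let $A$ be the $|X| \times |Y|$ biadjacency block (so $A(G^\varphi) = \left(\begin{smallmatrix} 0 & A \\ A^* & 0 \end{smallmatrix}\right)$ up to permutation); then $\rank A(G^\varphi) = 2\,\rank A$ over $\mathbb{H}$ (using that the rank of the Hermitian matrix equals twice the rank of the off-diagonal block — this needs a small justification over $\mathbb{H}$, e.g.\ via the Moore--Penrose / row-reduction argument), so $r = 2$ means $\rank A = 1$. A quaternion matrix of left row rank $1$ with all nonzero entries on a prescribed support, every entry a unit: every row is a left multiple of a single row, which forces the support to be a full rectangle (complete bipartite) and, after scaling rows and columns by units (switching!), forces all entries equal, i.e.\ all gains become $1$ after switching, equivalently all $4$-cycles have gain $1$. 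The delicate point is handling non-commutativity in ``$\rank A = 1$ implies $A = u v^*$ for quaternion vectors $u, v$'' and in checking that the $4$-cycle gain condition is exactly the switching-invariant obstruction; both are manageable but require care with left versus right scalars, and I would lean on the left-row-rank characterization recalled in the preliminaries to keep the bookkeeping straight.
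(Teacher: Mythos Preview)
Your proposal is correct, and the route you yourself flag as ``cleanest'' --- passing to the biadjacency block $B$ with $\rank A(G^\varphi)=2\rank B$ and reading both directions off the condition $\rank B=1$ --- is exactly the paper's proof. The paper does not separate the two implications: it observes that a zero entry in a rank-$1$ matrix $B$ forces an entire zero column and hence an isolated vertex (yielding complete bipartiteness from connectedness), and then checks via a two-row, two-column computation that proportionality of rows is equivalent to every $4$-cycle having gain~$1$. Your switching argument for the ``if'' direction and your forbidden-induced-subgraph argument for ``only if'' are both valid but longer detours; note in particular that you actually get \emph{no induced $P_4$} (every gain $P_4$ has rank $4$ by Lemma~\ref{LemmaP1}, irrespective of the gains), and a connected bipartite $P_4$-free graph is already complete bipartite, so the $P_5$ and $C_{\ge 6}$ analysis is unnecessary. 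The paper simply asserts $\rank A(G^\varphi)=2\rank B$ without comment, so your instinct that this deserves a word of justification over $\mathbb{H}$ is well placed.
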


\begin{proof}
Because the underlying graph is bipartite, there is a matrix $B$ such that 
 $$A(G^\varphi)=\begin{bmatrix}
                     O & B \\
                     B^* & O
                   \end{bmatrix}.$$
As $\rank(A)=2~\rank(B)$, it is clear that we require the rank of $B$ to be $1$, in other words, the rows are multiples of each other. First of all, this implies that $B$ cannot have any zeros, for otherwise there would be isolated vertices, which is a contradiction. Thus, $G^\varphi$ must be complete bipartite.

Next, note that the statement is true if one of the biparts of the bipartite graph has only one vertex. We may thus assume that both biparts of the bipartite graph have more than one vertex.

Consider two arbitrary rows of $B$; say they correspond to vertices $x_1,x_2$ in one bipart. Consider two arbitrary vertices in the other bipart, $y_1,y_2$, say.
Let $c_1=\varphi(x_1y_1)\varphi(x_2y_1)^*$ and $c_2=\varphi(x_1y_2)\varphi(x_2y_2)^*$. Then the cycle through these four vertices has gain $\varphi(x_1y_1x_2y_2x_1)=c_1c_2^*$, which equals $1$ precisely when $c_1=c_2$. It follows that the two rows are multiples of each other precisely when all $4$-cycles through $x_1$ and $x_2$ have gain $1$, which proves the entire statement.
\end{proof}

We are now ready to characterize all quaternion unit gain graphs with $g=r+2$.

\begin{proof}[{Proof of Theorem \ref{Theorem1}}]
We recall that the bound $g \leq r+2$ follows because $G^\varphi$ has a $g$-cycle as induced subgraph, and that equality can only occur if $g$ is even and all $g$-cycles have gain $(-1)^{g/2}$. One such graph is a $g$-cycle with gain $(-1)^{g/2}$ and by Lemma \ref{Lemmabipartite}, equality occurs for complete bipartite graphs in which all $4$-cycles have gain $1$. What therefore remains is to prove the reverse. 

Assume that $G^\varphi$ is not a $g$-cycle. Then $G^\varphi$ contains an induced $g$-cycle and because of connectivity, there is a vertex that is adjacent to at least one of the vertices of this cycle. This vertex must be adjacent to at least two of the vertices of the cycle, for otherwise $G^\varphi$ would have an induced subgraph of rank at least $g$ by Lemma \ref{Lemmapendantcycle}, which is a contradiction. As this vertex is now part of a cycle with other vertices of the $g$-cycle (a shortest cycle), it follows that $g \leq 4$. Because $g$ is even, we therefore conclude that $g=4$ and $r=2$.

Next, suppose that $G^\varphi$ contains odd cycles. Consider a shortest odd cycle, say of size $g_o$. Because this cycle is a shortest odd cycle, it must be an induced cycle. Moreover, $g_o \geq 5$, and hence by Lemma \ref{Lemma3}, $G^\varphi$ has an induced subgraph of rank at least $4$, which is a contradiction. Thus, $G^\varphi$ must be bipartite, and then it follows from Lemma \ref{Lemmabipartite} that $G^\varphi$ must be complete bipartite with all $4$-cycles having gain $1$.
\end{proof}


\end{document}